\newtheorem{thm}{Theorem}
 \newcommand{\thmref}[1]{Theorem~\ref{#1}}
\newcommand{\R}{{\mathbb R}}
\newcommand{\dl}{{\delta}}
\newcommand{\bee}{\begin{equation*}}
\newcommand{\eee}{\end{equation*}}
\newcommand{\be}{\begin{equation}}
\newcommand{\ee}{\end{equation}}
\newcommand{\pn}{\par\noindent}
\title{Justification of the Dynamical Systems Method\\ 
(DSM) for global homeomorphisms}
\author{A G Ramm\\
\small Department of Mathematics\\[-0.8ex]
\small Kansas State University, Manhattan, KS 66506-2602, USA\\[-0.8ex]
\small \texttt{ramm@math.ksu.edu}\\
}
\begin{document}

\date{}
\maketitle
\begin{abstract} The Dynamical Systems Method (DSM) is justified
for solving operator
equations $F(u)=f$, where $F$ is a nonlinear operator in a Hilbert space 
$H$. It is assumed that $F$ is a global homeomorphism of $H$ onto $H$,
that $F\in C^1_{loc}$, that is, it has a continuous with respect to
$u$ Fr\'echet derivative $F'(u)$, that the operator $[F'(u)]^{-1}$
exists for all $u\in H$ and is bounded, $||[F'(u)]^{-1}||\leq m(u)$,
where $m(u)>0$ is a constant, depending on $u$, and not necessarily 
uniformly bounded with respect to $u$. It is proved under
these assumptions that the continuous analog of the Newton's method
$\dot{u}=-[F'(u)]^{-1}(F(u)-f), \quad u(0)=u_0, \quad (*)$
converges strongly to the solution of the equation $F(u)=f$
for any $f\in H$ and any $u_0\in H$. The global (and even local) existence 
of the solution to the Cauchy problem (*) was not established 
earlier without assuming that $F'(u)$ is Lipschitz-continuous.
The case when $F$ is not a global homeomorphism but a monotone operator
in $H$ is also considered.   
\end{abstract}

\pn{\\MSC: 4705; 4706; 47J35   \\
{\em Key words:} The Dynamical Systems Method (DSM); surjectivity;
global homeomorphisms; monotone operators }

\noindent\textbf{Biographical note:} Prof. Alexander G. Ramm is an
author of more than 590 papers, 2 patents, 12 monographs, an editor
of 3 books. He is an associte editor of several Journals. He gave
more than 140 addresses at various Conferences and lectured at many
Universities in Europe, Asia, Australia and America. He won
Khwarizmi International Award in Mathematics, was a Mercator Professor 
in Germany,  an invited professor supported by the Royal Academy of 
Engineering in UK, was a London
Mathematical Society speaker, distinguished HKSTAM speaker, CNRS
research professor, Fulbright professor in Israel, distinguished
Foreign Professor in Mexico and Egypt. His research interests
include many areas of analysis, numerical analysis and mathematical
physics.\\

\section{Introduction}
Consider an operator equation: 
\be\label{e1} F(u)=f, \ee 
where $F$
is a nonlinear operator in a Hilbert space $H$.

We assume in this Section  that $F$ is a {\it global 
homeomorphism.}

For instance, $F$ may be a hemicontinuous  monotone 
operator such that a coercivity condition is satisfied, for example,
the following condition:
\be\label{e1'} \lim_{||u||\to \infty}\frac{(F(u),u)}{||u||}=\infty, \ee
where $(\cdot, \cdot)$ denotes the inner product in $H$ (see\cite{D}).
We assume that
$F\in C^1_{loc},$ i.e., the Fr\'{e}chet
derivative of $F$, $F'(u)$,  exists for every $u$ and depends continuously 
on $u$. Furthermore, we assume that  $[F'(u)]^{-1}$ exists and is 
bounded
for all $u\in H$, 
\be\label{e1"}
||[F'(u)]^{-1}||\leq m(u),
\ee 
where $m(u)$  depends on $u$  and {\it is not necessarily
uniformly bounded with respect to $u$. }

This assumption implies that $F$ is a {\it local} homeomorphism, but
it does not imply, in general, that $F$ is a {\it global} homeomorphism.
If $m(u)<m$, where $m>0$ is a 
constant independent of $u$, then it was proved in \cite{R489} that $F$
is a global homeomorphism.

While our main result in Section 1, Theorem 1, does not require
the monotonicity of $F$, the result in Section 2, Theorem 2,
will use the monotonicity of $F$.

We  assume in Section 2 that $F$
is monotone: 
\be\label{e2} F'(u)\geq 0\qquad \forall u\in H. \ee 
This means that $(F'(u)v,v)\geq 0$ for all $v\in H$.

In Remark 2, at the end of the paper, the following condition is 
mentioned:
\be\label{e3} \|F(u)\|<c  \Rightarrow\|u\|<c_1,\qquad c,c_1=const>0,
\ee 
which means that the preimages of bounded sets under the map $F$ are 
bounded sets. This condition does not hold for the operator
$F(u):=e^u$, $u\in \R$, $H=\R$, and that is why this monotone operator
$F$ is not surjective: equation $e^u=0$ does not have a solution in $H$.
    
By $c>0$ we denote in this paper various constants.

Our first main result, Theorem 1,  says that if $F\in C^1_{loc}$ is a 
global homeomorphism
and condition \eqref{e1"} holds, then a continuous analog
of the Newton's method (see equation \eqref{e4} below) converges globally,
that is, it converges for any initial approximation $u_0\in H$ and any 
right-hand side 
$f\in H$. 

{\it One of the  novel features of our result is the
absence of any smoothness assumptions on $F'(u)$: only the continuity of
$F'(u)$ with respect to $u$ is assumed.} 

In the earlier work 
(see \cite{R499}, \cite{R489}-\cite{R452}, \cite{R577}, and references 
therein,
except for \cite{R568} and \cite{R454}, \cite{R593}) it was 
often assumed that 
$F'(u)$ is Lipschitz continuous, or, at least, H\"older-continuous.

Our approach can be generalized to the case when $F$ is a local 
homeomorphism, if one uses the results in \cite{R575}.

In this paper for the first time no assumptions
on the smoothness of $F'(u)$ are made, only the continuity
of $F'(u)$ is assumed in a proof of the global existence of the solution
to the Cauchy problem (6), see Theorem 1 below. The author does not know 
of any way to prove even
the local existence of the solution to (6) without using the novel idea
and new method of the proof, given in the proof of Theorem 1.
The known methods do not seem to give any results even on the
local existence of the solution to problem (6) if $F'(u)$ is assumed
only continuous. Recall that the known Peano theorem fails
in infinite-dimensional Banach spaces. 
The standard assumption, that guarantees the local existence  
of the unique solution to the Cauchy problem (6) in an 
infinite-dimensional Banach space, is the 
Lipschitz condition for the operator $[F'(u)]^{-1}(F(u)-f)$,
which holds, in general, only if  $F'(u)$ is Lipschitz-continuous. 

In our second result, in Theorem 2 in Section 2, the operator $F$ is not 
assumed to be a global homeomorphism, and it is not assumed invertible
(injective), but it is assumed to be a monotone operator,
and it is assumed that equation  \eqref{e1} has a solution, possibly 
non-unique.

We give a Dynamical Systems Method (DSM) version for constructing the 
(unique) minimal-norm solution to
equation  \eqref{e1} with monotone operator $F$. This DSM version is a 
regularized continuous analog
of the Newton's method. We make no smoothness assumptions about $F'(u)$,
and assume only the continuity of $F'(u)$ with respect 
to $u$. 

Since we do not assume in Section 2 that
the operator $F'(u)$ is invertible in any sense, the problem, 
studied in this Section can be considered an ill-posed one.

Our proof of Theorem 2 contains new ideas and uses the 
ideas from the proof of Theorem 1.
 
Let us formulate our first result:
\begin{thm}\label{thm1}
If $F\in C^1_{loc}$ is a global homeomorphism and condition \eqref{e1"}  
holds, then  the 
problem \be\label{e4}
\dot{u}=-[F'(u)]^{-1}(F(u)-f),\quad u(0)=u_0; \
\dot{u}=\frac{du}{dt}, \ee 
is globally solvable for any $f$ and $u_0$ in
$H$, there exists 
the limit $u(\infty)=\lim_{t\to \infty} u(t),$ and
$F(u(\infty))=f$. 
\end{thm}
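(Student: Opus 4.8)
The plan is to bypass the ODE existence theory entirely by exploiting an algebraic reduction together with the global homeomorphism assumption. The key observation is that the \emph{residual} $w(t):=F(u(t))-f$ satisfies a decoupled linear equation. Indeed, if $u$ solves \eqref{e4} on some interval, then differentiating and using the chain rule (valid since $F\in C^1_{loc}$ and $u$ is differentiable) gives
\[
\dot w = F'(u)\dot u = -F'(u)[F'(u)]^{-1}(F(u)-f) = -(F(u)-f) = -w,
\]
so that $w(t)=w(0)e^{-t}=(F(u_0)-f)e^{-t}$. Consequently, along any solution one must have $F(u(t)) = f + (F(u_0)-f)e^{-t}=:g(t)$. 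Since $F$ is injective (being a global homeomorphism), this identity already forces uniqueness: $u(t)=F^{-1}(g(t))$ is determined.

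For existence I would turn this identity into a construction. Define $u(t):=F^{-1}(g(t))$, where $g(t)=f+(F(u_0)-f)e^{-t}$; this is well defined for all $t\ge 0$ precisely because $F$ maps $H$ bijectively onto $H$. It remains to check that this $u$ is $C^1$ and solves \eqref{e4}. Here I would invoke the inverse function theorem in $H$: since $F\in C^1_{loc}$ and, by \eqref{e1"}, $F'(u)$ is boundedly invertible at every point, $F^{-1}$ is $C^1$ with $(F^{-1})'(v)=[F'(F^{-1}(v))]^{-1}$, the right-hand side being continuous because $F'$ is continuous, $F^{-1}$ is continuous, and inversion of boundedly invertible operators is continuous. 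Since $g$ is smooth with $\dot g(t)=-(g(t)-f)$, the chain rule yields
\[
\dot u(t)=(F^{-1})'(g(t))\,\dot g(t)=[F'(u(t))]^{-1}\big(-(F(u(t))-f)\big),
\]
which is exactly \eqref{e4}; and $u(0)=F^{-1}(g(0))=F^{-1}(F(u_0))=u_0$. Thus global solvability holds for every $f,u_0\in H$.

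For the long-time behaviour I would note that $g(t)\to f$ as $t\to\infty$, so by continuity of $F^{-1}$ the limit $u(\infty)=\lim_{t\to\infty}u(t)=F^{-1}(f)$ exists, and $F(u(\infty))=f$, which is the desired solution of \eqref{e1}.

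The main obstacle is exactly the point the paper emphasizes: establishing even local, let alone global, solvability of \eqref{e4} under only the continuity (not Lipschitz- or H\"older-continuity) of $F'(u)$. Standard tools are unavailable, since Peano's theorem fails in infinite-dimensional spaces, and the Picard--Lindel\"of approach would require the map $u\mapsto[F'(u)]^{-1}(F(u)-f)$ to be locally Lipschitz, which needs more regularity of $F'$ than is assumed. The resolution above avoids these difficulties completely by never integrating the ODE directly: the residual computation linearizes the dynamics, and the global homeomorphism property lets one reconstruct $u(t)$ by hand as $F^{-1}(g(t))$. The only genuine technical care needed is the $C^1$-regularity of $F^{-1}$ and the justification of the chain rule in $H$, both of which follow from the inverse function theorem under the stated hypotheses.
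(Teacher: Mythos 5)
Your proposal is correct and follows essentially the same route as the paper: both reduce \eqref{e4} to the linear residual equation $\dot v=-v$ for $v=F(u)-f$, solve it explicitly, and reconstruct $u(t)=F^{-1}\bigl(f+(F(u_0)-f)e^{-t}\bigr)$, using the global homeomorphism property together with the inverse function theorem (via condition \eqref{e1"} and the continuity of $F'$) to get the $C^1$ regularity of $F^{-1}$ and hence that this $u$ solves the Cauchy problem. Your write-up is, if anything, slightly more explicit than the paper's about the chain-rule verification and the continuity of $v\mapsto[F'(F^{-1}(v))]^{-1}$, but the underlying idea is identical.
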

\begin{proof}
Denote 
\be\label{e4'} v:=F(u(t))-f.\ee Then 
$$\dot{v}=F'(u(t))\dot{u}=-v.$$ Thus,
problem \eqref{e4} is reduced to the following problem:
\be\label{e5} \dot{v}=-v,\quad v(0)=F(u_0)-f. \ee Problem \eqref{e5}
obviously has a unique global solution: 
\be\label{e6} v(t)=(F(u_0)-f)e^{-t}, \qquad \lim_{t\to \infty} 
v(t):=v(\infty)=0.
\ee 
Therefore, problem \eqref{e4} has a unique global solution. 

Let us explain the above statement in detail.
Consider an interval $[0,T]$, where $T>0$ is arbitrarily large. The
equation 
\be\label{e7} F(u(t))-f=v(t)\qquad 0\leq t\leq T, \ee
is uniquely solvable for $u(t)$ for any $v(t)$ because $F$ is a global
homeomorphism. 
Assumption \eqref{e1"},  the continuity of $F'(u)$ 
with respect to $u$, and the abstract inverse function theorem, imply
that the solution  $u(t)$ to equation \eqref{e7} is 
continuously differentiable with respect to $t$, because $v$ 
is continuously differentiable with respect to $t$ and $F$ is
continuously Fr\'echet differentiable with respect to $u$.
 
Differentiating \eqref{e7} and using relations \eqref{e5} and \eqref{e4'},
one gets the following equation:
\be\label{e8} F'(u(t))\dot{u}=\dot{v}=-v=-(F(u(t))-f). \ee
Using assumption \eqref{e1"}, one concludes from \eqref{e8} that $u=u(t)$ 
solves \eqref{e4} in the interval $t\in [0,T]$. Since $T>0$ is arbitrary,
$u=u(t)$ is a global solution to \eqref{e4}.

Since $\lim_{t\to \infty}v(t):=v(\infty)$ exists, and $F$ is a global 
homeomorphism, one concludes that the limit $\lim_{t\to 
\infty}u(t):=u(\infty)$ does exist.

Since $v(\infty)=0$, it follows that $F(u(\infty))=f$.

Theorem 1 is proved. 
\end{proof}
\textbf{Remark 1.} Theorem 1 implies that {\it any}  equation  \eqref{e1}
with $F$ being a global homeomorphism and $F\in C^1_{loc}$, 
such that condition \eqref{e1"} 
holds, can be solved by the DSM method \eqref{e4}, which is 
a continuous analog of the Newton's method.

\section{Finding the minimal-norm solution}

{\it Assumptions: In this Section we assume that $F\in C^1_{loc}$ is 
monotone,
that is, $F'(u)\geq 0$,
and assumptions  \eqref{e1"}- \eqref{e3} hold, but $F$ is not
a global homeomorphism, so that equation \eqref{e1} may have many 
solutions. We assume that \eqref{e1} has a solution. }

Since $F$ is monotone and continuous, and the set of 
of solutions to \eqref{e1} is non-empty, this set is closed and convex, so 
it has a {\it unique 
element with minimal norm} (see \cite{R499}). This element is called the 
minimal-norm 
solution to \eqref{e1}, and is denoted by $y$. 

{\it Our aim is to give a method 
for finding this element by a version of the DSM.}

Consider the problem \be\label{e13}
\dot{u}=-[F'(u)+a(t)I]^{-1}[F(u)+a(t)u-f],\quad u(0)=u_0, \ee where
$a\in C^1([0,\infty)),$ $\dot{a}<0$, 
\be\label{e14}
a(t)>0 \,\, \forall t\geq 0, \quad \lim_{t\to 
\infty}\frac{\dot{a}}{a}=0,\quad \lim_{t\to
\infty}a(t)=0. \ee

The assumptions of \thmref{thm1} do not hold 
for the operator  $F(\cdot)+a(t)I$ in the sense that
the quantity $\frac 1 {a(t)}$ in the estimate (14), see below, tends 
to infinity as $t\to \infty$. Let us explain this statement.

Under our {\it Assumptions}, the operator $F(\cdot)+a(t)I$ for every $t>0$
is a global homeomorphism
because $F$ is a monotone continuous operator and $a(t)>0$. 
One has
\be\label{e15'}\|[F'(u)+a(t)]^{-1}\|\leq \frac{1}{a(t)}.\ee
Therefore, the constant $m(u)$  for the operator $F(u)+a(t)u$
is $\frac{1}{a(t)}$. As
$t\to \infty$, this constant tends to infinity because $\lim_{t\to 
\infty}a(t)=0$.

Let us state our result:
\begin{thm}\label{thm2} Assume that $F\in C^1_{loc}$ is a monotone 
operator, equation
\eqref{e1} has a solution for the given $f$, and 
conditions  \eqref{e14} hold. Then
problem \eqref{e13} has a unique global
solution $u(t)$, there exists $u(\infty)$, and $u(\infty)=y$, where
$y$ is the minimal-norm solution to \eqref{e1}.
\end{thm}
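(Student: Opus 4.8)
The plan is to follow the architecture of the proof of \thmref{thm1}: replace the genuinely non-Lipschitz Cauchy problem \eqref{e13} by an equation for an auxiliary variable that can be solved by elementary ODE theory, and then read off the convergence of $u(t)$. First I would introduce the regularized solution $V=V(t)$, defined for each $t\geq 0$ as the unique solution of $F(V)+a(t)V-f=0$; this is well defined because, under the \emph{Assumptions}, $F(\cdot)+a(t)I$ is a monotone continuous coercive map, hence a global homeomorphism. Pairing $F(V)-F(y)+a(t)V=0$ with $V-y$ and using monotonicity gives $\|V(t)\|\leq\|y\|$, and the classical regularization argument (boundedness of $V$, then $F(V)=f-a(t)V\to f$, monotonicity/hemicontinuity, and uniqueness of the minimal-norm solution) yields $V(t)\to y$ strongly as $t\to\infty$.

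For global existence and uniqueness I would pass to $w(t):=F(u(t))+a(t)u(t)-f$. Write $G_t:=F(\cdot)+a(t)I$, a homeomorphism whose inverse is Lipschitz with constant $1/a(t)$, since $(G_tx-G_ty,x-y)\geq a(t)\|x-y\|^2$ forces $\|G_tx-G_ty\|\geq a(t)\|x-y\|$. Then $u=G_t^{-1}(w+f)$, and differentiating \eqref{e13} produces the closed equation $\dot w=-w+\dot a\,G_t^{-1}(w+f)$ with $w(0)=F(u_0)+a(0)u_0-f$. The key point, and the analogue of the device in \thmref{thm1}, is that although the right-hand side of \eqref{e13} is only continuous in $u$, the right-hand side of the $w$-equation is Lipschitz in $w$ (constant $1+|\dot a|/a$) and continuous in $t$; hence the standard Picard iteration gives a unique local solution. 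Since $\|G_t^{-1}(w+f)\|\leq\|V(t)\|+\frac1a\|w\|\leq\|y\|+\frac1a\|w\|$, the growth of the right-hand side is at most linear in $w$ on every $[0,T]$, so $w$, and therefore $u=G_t^{-1}(w+f)$, exists and is unique globally; the abstract inverse function theorem, exactly as in \thmref{thm1}, gives $u\in C^1$.

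For convergence I would control $w$ directly. From $F(V)+aV-f=0$ one has $w=F(u)-F(V)+a(u-V)$, so with $z:=u-V$ monotonicity gives $(w,z)\geq a\|z\|^2$, whence $\|z\|\leq\|w\|/a$. Using $\dot w=-w+\dot a u$ with $u=V+z$, I compute $\frac12\frac{d}{dt}\|w\|^2=-\|w\|^2+\dot a(w,V)+\dot a(w,z)$; since $\dot a<0$ and $(w,z)\geq0$ the last term is $\leq0$, and $\|V\|\leq\|y\|$ gives $\frac{d}{dt}\|w\|\leq-\|w\|+|\dot a|\,\|y\|$, hence $\|w(t)\|\leq\|w(0)\|e^{-t}+\|y\|\int_0^t e^{-(t-s)}|\dot a(s)|\,ds$. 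The crux is to show $\|w(t)\|/a(t)\to0$: integration by parts turns the convolution into $e^{-t}a(0)-a(t)+\int_0^t e^{-(t-s)}a(s)\,ds$, and because $\dot a/a\to0$ makes $a$ slowly varying one gets $a(s)/a(t)\to1$ on bounded windows near $s=t$, so $\frac1{a(t)}\int_0^t e^{-(t-s)}a(s)\,ds\to1$ and the whole expression is $o(a(t))$; the same condition forces $e^{-t}/a(t)\to0$, handling the initial term. Thus $\|z\|\leq\|w\|/a\to0$, and together with $V(t)\to y$ this gives $\lim_{t\to\infty}u(t)=y$ with $F(u(\infty))=f$.

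The main obstacle, I expect, is the global existence statement: the right-hand side of \eqref{e13} is only continuous in $u$, Peano's theorem fails in infinite dimensions, and the natural Lipschitz constant $1/a(t)$ degenerates as $t\to\infty$. The resolution is the passage to $w$, which moves all the $t$-dependent loss of regularity into coefficients that are harmless on each finite interval while restoring Lipschitz dependence in the state variable. A secondary technical point is justifying the differential inequality for $\|w\|$ at instants where $w=0$, which I would handle by working throughout with $\tfrac12\|w\|^2$ (equivalently, with upper Dini derivatives).
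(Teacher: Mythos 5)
Your proposal is correct and follows essentially the same route as the paper: the same substitution $v=F(u)+a(t)u-f$, the same regularized trajectory $w(t)$ (your $V$) satisfying $\|u-w\|\le \|v\|/a$ with $w(\infty)=y$, and the same reduction of everything to showing $\|v(t)\|/a(t)\to 0$. Your one genuine streamlining is the observation that $\dot a\,(v,u-V)\le 0$, which gives $\frac{d}{dt}\|v\|\le -\|v\|+|\dot a|\,\|y\|$ directly, whereas the paper carries the extra term $|\dot a|\,h/a$ through the Gronwall step and disposes of it with the $\delta$-argument and L'Hospital's rule.
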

\begin{proof}
Let 
\be\label{e14'}v(t)=F(u(t))+a(t)u(t)-f.\ee 
Then
\be\label{e15}\dot{v}=-v+\dot{a}(t)u(t),\qquad
v(0)=F(u_0)+a(0)u_0-f.\ee 
The map $u=G(v)$, where 
$$ v(t)=G^{-1}(u):=F(u)+a(t)u-f,$$ 
is a
local diffeomorphism for any $t\geq 0$, because $a(t)>0$ $\forall
t\geq 0$ and \eqref{e15'} holds. 

As in the
proof of \thmref{thm1} one concludes that the solution to \eqref{e13}
exists locally because the solution $v=v(t)$ to \eqref{e15} exists 
locally.

The solution to \eqref{e15} exists locally by the standard result, because  
the map $u=G(v)$ is $C^1_{loc}$ local diffeomorphism. The solution to 
\eqref{e15}
exists globally (see, e.g., \cite{R499}, p. 248) if
\be\label{ec} \sup_{t\geq 0}\|v(t)\|<c,\ee 
where $c>0$ here and below denote various estimation constants. 

{\it Let us briefly recall the proof
of this statement.} 

Assume that inequality \eqref{ec} holds, 
but the maximal interval 
of the existence of $v$ is finite, say, $[0,T)$, $T<\infty$.
 The length $\ell$ of the interval of the local existence of the
solution to the Cauchy problem \eqref{e15} depends only on the 
Lipschitz constant of $G(v)$ and on
the norm of the right hand side of \eqref{e15}. Both these quantities 
depend only on the constant $c$.
One solves the Cauchy problem
for equation \eqref{e15} with the initial data $v(T-0.5\ell)$
at the initial point $t=T-0.5\ell$. The unique solution to this problem
exists on the interval $[T-0.5\ell, T-0.5\ell+\ell)$.
 Consequently, $v$ exists on the interval $[0,T+0.5\ell)$ greater than 
$[0,T)$. This is a contradiction which proves that $T=\infty$.

The map $u=G(v)$ is $C^1_{loc}$ because it is inverse to the $C^1_{loc}$ 
map $v=F(u)+a(t)u-f:=G^{-1}(u):=Q(u),$ and $\|[Q'(u)]^{-1}\|\leq
\frac{1}{a(t)}<\infty$ for every $t\geq 0$. 

Therefore, the estimate $ \sup_{t\geq 0}\|v(t)\|<c$ holds if and only if 
\be\label{e15"}
\sup_{t\geq 0}\|u(t)\|<c,\ee
where $c>0$ stands for various constants.

Thus, to prove that $u(t)$
exists globally it is sufficient to prove inequality \eqref{e15"}. 

We prove this inequality, the existence of $u(\infty)$, and the
relation $u(\infty)=y$, by establishing two facts: 

a) the following inequality: 
\be\label{e16}
\|u(t)-w(t)\|\leq \frac{\|v(t)\|}{a(t)}, \ee 
and 

b) the limiting relation:
\be\label{e17}
\lim_{t\to \infty}\frac{\|v(t)\|}{a(t)}=0. \ee 
In formula \eqref{e16} $w(t)$ solves
the problem 
\be\label{e18} F(w)+a(t)w-f=0, \ee 
and $a(t)$ satisfies \eqref{e14}.
It is proved in
\cite{R499} that if $F$ is a monotone hemicontinuous operator and
equation \eqref{e1} has a solution, then equation \eqref{e18}
has a unique solution for any $f$ if $a(t)>0$, the limit  $w(\infty)$ 
exists, and $w(\infty)=y$. This, \eqref{e16}, and \eqref{e17} imply the 
existence of $u(\infty)$  and the relation $u(\infty)=y$.

Let us prove inequality \eqref{e16}. Since $F$ is monotone, one
has 
$$(F(u)-F(w),u-w)\geq 0,$$ 
so 
\be\label{e19} (v,u-w)=(F(u)-F(w)+a(t)(u-w),u-w)\geq
a(t)\|u-w\|^2. \ee Applying the Cauchy inequality to the left side
of \eqref{e19}, one gets \eqref{e16}. 

Let us prove \eqref{e17}.
Denote 
\be\label{eh}h(t):=\|v(t)\|.\ee 
Multiply equation  \eqref{e15} by $v$ and get
\be\label{e20'} h\dot{h}\leq -h^2+|\dot{a}|\|u(t)\|h.\ee 
If $h(t)> 0$, one obtains from  \eqref{e20'} the following inequality
\be\label{e20} \dot{h}(t)\leq
-h(t)+|\dot{a}(t)|(\|u(t)-w(t)\|+\|w(t)\|). \ee 
If $h(t)=0$ on some interval $t\in (a,b)$, then 
$\dot{h}=0$ on this interval, and the above inequality holds trivially.
If $h(t)=0$ at an isolated point $t=s$, i.e.,  $h(s)=0$, then \eqref{e20} 
holds by continuity at $s+0$. The existence of the derivative 
$\dot{h}(s+0)$ at the point $s$ at which $h(s)=0$ can be checked using
the definition of the one-sided derivative: 
\be\label{e20a}\dot{h}(s+0)=\lim_{\tau\to 
+0}[h(s+\tau)-h(s)]/\tau=\lim_{\tau\to 
+0}h(s+\tau)/\tau.
\ee
Since $v(t)$ is continuously differentiable, one has 
$h(s+\tau):=||v(s+\tau)||=||\tau\dot{v}(s)+o(\tau)||$.
Therefore the limit in \eqref{e20a} exists and is equal to 
$||\dot{v}(s)||$. This limit is denoted $\dot{h}(s)$.
Thus, inequality \eqref{e20} holds for all $t\ge 0$.

Since $w(\infty)$ exists, one has 
\be\label{ew}\sup_{t\geq 0}\|w(t)\|<c.\ee 
Using \eqref{ew} and  \eqref{e16}, one
gets from \eqref{e20} the inequality 
\be\label{e21} \dot{h}\leq
-h+\frac{|\dot{a}(t)|}{a(t)}h(t)+|\dot{a}(t)|c. \ee 
Let us derive from inequality  
\eqref{e21} the desired conclusion \eqref{e17}.

Fix an arbitrary
small $\dl>0$. The first assumption \eqref{e14} implies that
\be\label{e22} \frac{|\dot{a}(t)|}{a(t)}\leq \dl \quad for \ t\geq
t_\delta. \ee 
Using the well-known Gronwall inequality, one obtains from  \eqref{e21} 
the following inequality
\be\label{e23} h(t)\leq
h(t_\dl)e^{-(1-\dl)(t-t_\delta)}+c\int_{t_\dl}^te^{-(1-\dl)(t-s)}|\dot{a}(s)|ds.
\ee 
Let us divide both sides of \eqref{e23} by $a(t)$ and prove that
the following two relations hold:
\be\label{e24} \lim_{t\to \infty}\frac{e^{-(1-\dl)t}}{a(t)}=0, \ee
and \be\label{e25} \lim_{t\to \infty}\frac{\int_{t_\dl}^t
e^{(1-\dl)s}|\dot{a}(s)|ds}{e^{(1-\dl)t}a(t)}=0. \ee This will
complete the proof of \thmref{thm2}. 

From inequality  \eqref{e22} one gets
\be\label{e26} ce^{-\dl t}\leq a(t). \ee
 This implies  relation \eqref{e24} if $\dl<\frac{1}{2}.$ 

Applying the L'Hospital rule one
 proves relation  \eqref{e25} because 
$$\lim_{t\to
 \infty}\frac{|\dot{a}(t)|}{(1-\dl)a(t)+\dot{a}(t)}=0,$$ 
as follows from the second
 assumption \eqref{e14} provided that $\delta<1$. \\
 \thmref{thm2} is proved. \hfill $\Box$

\textbf{Remark 2.} The equation $e^u=0$, $u\in \R$, $H=\R$, does
not have a solution, although $F(u)=e^u$ is monotone, $F'(u)=e^u>0$
is boundedly invertible for every $u\in \R$ and
$\|[e^u]^{-1}\|=e^{-u}\leq m(u)<\infty$ for every $u\in \R$.
Assumption \eqref{e3} is not satisfied in this example, and this
is the reason for the unsolvability of the equation $e^x=0.$ Note that
$e^{x}\leq c$ as $x\to -\infty$, so assumption \eqref{e3} does not hold.

In recent papers \cite{R570} and \cite{R587} some nonlinear differential 
inequalities are derived and used for a study of the large time behavior 
of solutions to evolution problems.

\end{proof}

\end{document}